\newtheorem{theorem}{Theorem}
\theoremstyle{plain}
\newtheorem{corollary}{Corollary}
\newtheorem{lemma}{Lemma}
\newtheorem{remark}{Remark}
\numberwithin{equation}{section}
\begin{document}
\title[Hierarchical fixed point problems and variational inequalities]{%
Convergence Theorems for Hierarchical Fixed Point Problems and Variational
Inequalities}
\author{\.{I}brahim Karahan}
\address{Department of Mathematics, Faculty of Science, Erzurum Technical
University, Erzurum, 25240, Turkey}
\email{ibrahimkarahan@erzurum.edu.tr}
\author{Murat \"{O}zdemir}
\address{Department of Mathematics, Faculty of Science, Ataturk University,
Erzurum, 25240, Turkey.}
\email{mozdemir@atauni.edu.tr}
\subjclass[2000]{47H10; 47J20; 47H09; 47H05}
\keywords{Variational inequality; hierarchical fixed point; convex
minimization problem; nearly nonexpansive mappings; strong convergence}

\begin{abstract}
This paper deals with a modified iterative projection method for
approximating a solution of hierarchical fixed point problems for nearly
nonexpansive mappings. Some strong convergence theorems for the proposed
method are presented under certain approximate assumptions of mappings and
parameters. As a special case, this projection method solves some quadratic
minimization problem. It should be noted that the proposed method can be
regarded as a generalized version of Wang et.al. \cite{wang}, Ceng et. al. 
\cite{ceng}, Sahu \cite{sakasa} and many other authors.
\end{abstract}

\maketitle

\section{Introduction}

Throughout this paper, $C$ is a nonempty closed convex subset of a real
Hilbert space $H$, $\left\langle \cdot ,\cdot \right\rangle $ denotes the
associated inner product, $\left\Vert \cdot \right\Vert $ stands for the
corresponding norm. Let $I$ be the idendity mapping on $C$, $P_{C}$ be the
metric projection of $H$ onto $C.$ To begin with, let us recall the
following concepts which are common use in the context of convex and
nonlinear analysis. For all $x,y\in C,$ a mapping $F:C\rightarrow H$ is said
to be monotone if $\left\langle Fx-Fy,x-y\right\rangle \geq 0$, $\eta $%
-strongly monotone if there exists a positive real number $\eta $ such that $%
\left\langle Fx-Fy,x-y\right\rangle \geq \eta \left\Vert x-y\right\Vert ^{2}$%
, $L$-Lipschitzian if there exists a positive real number $L$ such that $%
\left\Vert Fx-Fy\right\Vert \leq L\left\Vert x-y\right\Vert $. Let us also
recall that a mapping $T:C\rightarrow C$ is said to be contraction if there
exists a constant $k\in \left[ 0,1\right) $ such that $\left\Vert
Tx-Ty\right\Vert \leq k\left\Vert x-y\right\Vert $, nonexpansive if $%
\left\Vert Tx-Ty\right\Vert \leq \left\Vert x-y\right\Vert $, asymptotically
nonexpansive if for each $n\geq 1$, there exists a positive constant $%
k_{n}\geq 1$ with $\lim_{n\rightarrow \infty }k_{n}=1$\ such that $%
\left\Vert T^{n}x-T^{n}y\right\Vert \leq k_{n}\left\Vert x-y\right\Vert $,
for all $x,y\in C$.

As a generalization of asymptotically nonexpansive mappings, Sahu \cite%
{sahu1} introduced the class of nearly Lipschitzian mappings. Let fix a
sequence $\left\{ a_{n}\right\} $ in $\left[ 0,\infty \right) $\ with $%
a_{n}\rightarrow 0$. A mapping $T:C\rightarrow C$ is called nearly
Lipschitzian with respect to $\left\{ a_{n}\right\} $ if for each $n\geq 1$,
there exist a constant $k_{n}\geq 0$ such that%
\begin{equation}
\left\Vert T^{n}x-T^{n}y\right\Vert \leq k_{n}\left( \left\Vert
x-y\right\Vert +a_{n}\right) \text{, }\forall x,y\in C.  \label{aa}
\end{equation}%
The infimum constant $k_{n}$ for which (\ref{aa}) holds will be denoted by $%
\eta \left( T^{n}\right) $ and called nearly Lipschitz constant. Notice that%
\begin{equation*}
\eta \left( T^{n}\right) =\sup \left\{ \frac{\left\Vert
T^{n}x-T^{n}y\right\Vert }{\left\Vert x-y\right\Vert +a_{n}}:x,y\in C\text{, 
}x\neq y\right\} .
\end{equation*}%
A nearly Lipschitzian mapping $T$ with sequence $\left\{ a_{n},\eta \left(
T^{n}\right) \right\} $\ is said to be nearly nonexpansive if $\eta \left(
T^{n}\right) \leq 1$ for $n\geq 1$. In this paper, we study with a nearly
nonexpansive mapping which is studied by some authors (see \cite{sahu1, aos,
sakasa}). This type of mappings (not necessarily continuous) are important
with regard to be generalization of the asymptotically nonexpansive
mappings. Exactly, the class of this type of mappings is an intermediate
class between the class of asymptotically nonexpansive mappings and that of
mappings of asymptotically nonexpansive type (please see \cite{sahu1} for
nearly nonexpansive mappings examples).

Now, we focus on the hierarchical fixed point problem for a nearly
nonexpansive mapping $T$ with respect to a nonexpansive mapping $S$. This
problem is to find a point $x^{\ast }\in Fix\left( T\right) $ satisfying%
\begin{equation}
\left\langle (I-S)x^{\ast },x^{\ast }-x\right\rangle \leq 0,\text{ \ }%
\forall x\in Fix\left( T\right) ,  \label{a1}
\end{equation}%
where $Fix\left( T\right) $ is the set of fixed points of $T$, i.e., $%
Fix\left( T\right) =\left\{ x\in C:Tx=x\right\} $. It is easy to see that
the problem (\ref{a1}) is equivalent to the problem of finding a point $%
x^{\ast }\in C$ that satisfies $x^{\ast }=P_{Fix\left( T\right) }Sx^{\ast }$.

Let $N_{Fix\left( T\right) }$ be the normal cone to $Fix\left( T\right) $
defined by%
\begin{equation*}
N_{Fix\left( T\right) }x=\left\{ 
\begin{array}{cc}
\left\{ u\in H:\left\langle y-x,u\right\rangle \leq 0,\text{ }\forall y\in
Fix\left( T\right) \right\} , & \text{if }x\in Fix\left( T\right) \\ 
\emptyset , & \text{ \ if }x\notin Fix\left( T\right) .%
\end{array}%
\right.
\end{equation*}%
Then, the hierarchical fixed point problem is equivalent to the variational
inclusion problem which is to find a point $x^{\ast }\in C$ such that%
\begin{equation*}
0\in \left( I-S\right) x^{\ast }+N_{Fix\left( T\right) }x^{\ast }.
\end{equation*}%
The existence problem of hierarchical fixed points for a nonlinear mapping
and approximation problem has been studied by several authors (see \cite%
{yao, moudafi, abdul, cian, marxu, xu, deng, cenpe}). In 2006, Marino and Xu 
\cite{marino} introduced the following viscosity iterative method:%
\begin{equation}
x_{n+1}=\alpha _{n}\gamma f(x_{n})+\left( I-\alpha _{n}A\right) Tx_{n},\text{
}\forall n\geq 0,  \label{f1}
\end{equation}%
where $f$ is a contraction, $T$ is a nonexpansive mapping and $A$ is a
strongly positive bounded linear operator on $H$; i.e., $\left\langle
Ax,x\right\rangle \geq \gamma \left\Vert x\right\Vert ,$ $\forall x\in H$
for some $\gamma >0$. Under the appropriate conditions, they proved that the
sequence $\{x_{n}\}$ defined by (\ref{f1}) converges strongly to the unique
solution of the variational inequality 
\begin{equation}
\left\langle \left( \gamma f-A\right) x^{\ast },x-x^{\ast }\right\rangle
\leq 0,\text{ }\forall x\in C,  \label{v1}
\end{equation}%
which is the optimality condition for the minimization problem%
\begin{equation*}
\min_{x\in C}\frac{1}{2}\left\langle Ax,x\right\rangle -h(x)
\end{equation*}%
where $h$ is a potential function for $\gamma f$ i.e., $h^{\prime
}(x)=\gamma f(x)$ for all $x\in H$.

In 2011, Ceng et al. \cite{ceng} generalized the iterative method of Marino
and Xu \cite{marino} by taking a Lipschitzian mapping $V$ and Lipschitzian
and strongly monotone operator $F$ instead of the mappings $f$ and $A$,
respectively. They gave the following iterative method:%
\begin{equation}
x_{n+1}=P_{C}\left[ \alpha _{n}\rho Vx_{n}+\left( I-\alpha _{n}\mu F\right)
Tx_{n}\right] ,\text{ }\forall n\geq 0,  \label{f2}
\end{equation}%
where $P_{C}$ is a metric projection and $T$ is a nonexpansive mapping and
also proved that the sequence $\left\{ x_{n}\right\} $ generated by (\ref{f2}%
) converges strongly to the unique solution of the variational inequality%
\begin{equation}
\left\langle \left( \rho V-\mu F\right) x^{\ast },x-x^{\ast }\right\rangle
\leq 0,\text{ }\forall x\in Fix(T).  \label{v2}
\end{equation}%
Recently, motivated by the iteration method (\ref{f2}) of Ceng et al. \cite%
{ceng}, Wang and Xu \cite{wang} studied on the following iterative method
for a hierarchical fixed point problem:%
\begin{equation}
\left\{ 
\begin{array}{l}
y_{n}=\beta _{n}Sx_{n}+\left( 1-\beta _{n}\right) x_{n},\text{ \ \ \ \ \ \ \
\ \ \ \ \ \ \ \ \ \ \ \ \ \ \ \ } \\ 
x_{n+1}=P_{C}\left[ \alpha _{n}\rho Vx_{n}+\left( I-\alpha _{n}\mu F\right)
Ty_{n}\right] ,\text{ }\forall n\geq 0,%
\end{array}%
\right.  \label{f4}
\end{equation}%
where $S,T:C\rightarrow C$ are nonexpansive mappings, $V:C\rightarrow H$ is
a $\gamma $-Lipschitzian mapping and $F:C\rightarrow H$ is a $L$%
-Lipschitzian and $\eta $-strongly monotone operator. They proved that under
some suitable assumptions on the sequences $\left\{ \alpha _{n}\right\} $
and $\left\{ \beta _{n}\right\} $, the sequence $\{x_{n}\}$ generated by (%
\ref{f4}) converges strongly to the hierarchical fixed point of $T$ with
respect to the mapping $S$ which is the unique solution of the variational
inequality (\ref{v2}). With this study, Wang and Xu extends and improves the
many recent results of other authors.

Let $\left\{ T_{n}\right\} $ be a sequence of mappings from $C$ into $H$ and
fix a sequence $\left\{ a_{n}\right\} $ in $\left[ 0,\infty \right) $\ with $%
a_{n}\rightarrow 0$. Then, $\left\{ T_{n}\right\} $ is called a sequence of
nearly nonexpansive mappings \cite{wong} with respect to a sequence $\left\{
a_{n}\right\} $ if%
\begin{equation}
\left\Vert T_{n}x-T_{n}y\right\Vert \leq \left\Vert x-y\right\Vert +a_{n}%
\text{, }\forall x,y\in C\text{, }n\geq 1.  \label{nearly}
\end{equation}%
It is obvious that the sequence of nearly nonexpansive mappings is a wider
class of sequence of nonexpansive mappings. For the sequence of nearly
nonexpansive mappings defined by (\ref{nearly}), Sahu et. al. \cite{sahu2}
introduced a new iteration method to solve the hierarchical fixed point
problem and variational inequality problem.

\begin{remark}
\label{AA} Let $\left\{ T_{n}\right\} $ be a sequence of nearly nonexpansive
mappings. Then,

\begin{enumerate}
\item for each $n\geq 1$, $T_{n}$\ is not a nearly nonexpansive mapping.

\item if $T$ is a mapping on $C$ defined by $Tx=\lim_{n\rightarrow \infty
}T_{n}x$ for all $x\in C$, then, it is clear that $T$ is a nonexpansive
mapping.
\end{enumerate}
\end{remark}

Recently, in 2012, Sahu et al. \cite{sakasa} introduced the following
iterative method for the sequence of nearly nonexpansive mappings $\left\{
T_{n}\right\} $ defined by (\ref{nearly})%
\begin{equation}
x_{n+1}=P_{C}\left[ \alpha _{n}\rho Vx_{n}+\left( 1-\alpha _{n}\mu F\right)
T_{n}x_{n}\right] ,\text{ }\forall n\geq 1.  \label{f3}
\end{equation}%
They proved that the sequence $\{x_{n}\}$ generated by (\ref{f3}) converges
strongly to the unique solution of the variational inequality (\ref{v2}).

\begin{remark}
\label{AB} Since the mapping $T_{n}$ is not a nearly nonexpansive mapping
for each $n\geq 1$, if one take $T_{n}:=T$ for all $n\geq 1$ such that $T$
is a nearly nonexpansive mapping, then the iteration (\ref{f3}) is not well
defined. Hence, the main result of Sahu et. al. \cite{sakasa} is no longer
valid for a nearly nonexpansive mapping.
\end{remark}

In this paper, motivated and inspired by the work of Wang and Xu \cite{wang}%
, we introduce a modified iterative projection method to find a hierarchical
fixed point of a nearly nonexpansive mapping with respect to a nonexpansive
mapping. We show that our iterative method converges strongly to the unique
solution of the variational inequality (\ref{v2}). As a special case,
presented projection method solves some quadratic minimization problem.
Also, our method improves and generalizes corresponding results of Yao et.
al. \cite{yao}, Marino and Xu \cite{marino}, Ceng et. al. \cite{ceng}, Wang
and Xu \cite{wang}, Moudafi \cite{mou}, Xu \cite{xu1}, Tian \cite{tian} and
Suzuki \cite{suzuki}.

\section{Preliminaries}

This section contains some lemmas and definitions which will be used in the
proof of our main result in the following section. We write $%
x_{n}\rightharpoonup x$ to indicate that the sequence $\left\{ x_{n}\right\} 
$ converges weakly to $x,$ and $x_{n}\rightarrow x$ for the strong
convergence. A mapping $P_{C}:H\rightarrow C$ is called a metric projection
if there exists a unique nearest point in $C$ denoted by $P_{C}x$ such that%
\begin{equation*}
\left\Vert x-P_{C}x\right\Vert =\inf_{y\in C}\left\Vert x-y\right\Vert \text{%
, }\forall x\in H
\end{equation*}%
It is easy to see that $P_{C}$ is a nonexpansive mapping and it satisfies
the following inequality:%
\begin{equation}
\left\langle x-P_{C}x,y-P_{C}x\right\rangle \leq 0,\text{ }\forall x\in
H,y\in C.  \label{c1}
\end{equation}%
Now, we give the definitions of a demicontinuous mapping, asymptotic radius
and asmptotic center.

Let $C$ be a nonempty subset of a Banach space $X$ and $T:C\rightarrow C$ be
a mapping. $T$ is called demicontinuous if, whenever a sequence $\left\{
x_{n}\right\} $ in $C$ converges strongly to $x\in X$, then $\left\{
Tx_{n}\right\} $ converges weakly to $Tx.$

Let $C$ be a nonempty closed convex subset of a uniformly convex Banach
space $X$, $\left\{ x_{n}\right\} $ be a bounded sequence in $X$ and $%
r:C\rightarrow \left[ 0,\infty \right) $ be a functional defined by%
\begin{equation*}
r\left( x\right) =\limsup_{n\rightarrow \infty }\left\Vert
x_{n}-x\right\Vert ,\text{ }x\in C.
\end{equation*}%
The infimum of $r\left( \cdot \right) $ over $C$ is called asymptotic radius
of $\left\{ x_{n}\right\} $ with respect to $C$ and is denoted by $r\left(
C,\left\{ x_{n}\right\} \right) $. A point $\widetilde{x}\in C$ is said to
be an asymptotic center of the sequence $\left\{ x_{n}\right\} $ with
respect to $C$ if%
\begin{equation*}
r\left( \widetilde{x}\right) =\min \left\{ r\left( x\right) :x\in C\right\} .
\end{equation*}%
The set of all asymptotic centers is denoted by $A\left( C,\left\{
x_{n}\right\} \right) $. Related with these definitions, we will use the
followings in our main results.

\begin{theorem}
\label{C}\cite{AOS} Let $C$ be a nonempty closed convex subset of a
uniformly convex Banach space $X$ satisfying the Opial condition. If $%
\left\{ x_{n}\right\} $ is a sequence in $C$ such that $x_{n}\rightharpoonup 
$ $\widetilde{x}$, then $\widetilde{x}$ is the asymptotic center of $\left\{
x_{n}\right\} $ in $C$.
\end{theorem}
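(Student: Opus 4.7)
The plan is to verify directly that $\widetilde{x}$ is a minimizer of the functional $r(\cdot)$ on $C$, using the Opial property as the only nontrivial analytic ingredient. Uniform convexity of $X$ enters only at the end to guarantee uniqueness of that minimizer, so that $\widetilde{x}$ deserves the title of \emph{the} asymptotic center rather than merely \emph{an} asymptotic center.

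I would argue by contradiction: suppose there is some $y\in C$ with $r(y)<r(\widetilde{x})$. Strict inequality forces $y\neq\widetilde{x}$, which is exactly what the Opial condition requires. The technical nuisance is that $r$ is defined via $\limsup$ whereas Opial is usually formulated with $\liminf$. To bridge this, I would extract a subsequence $\{x_{n_{k}}\}$ along which $\|x_{n_{k}}-\widetilde{x}\|\to r(\widetilde{x})$. Since $x_{n}\rightharpoonup\widetilde{x}$ we still have $x_{n_{k}}\rightharpoonup\widetilde{x}$, so the Opial condition yields
$$r(\widetilde{x})=\lim_{k\to\infty}\|x_{n_{k}}-\widetilde{x}\|=\liminf_{k\to\infty}\|x_{n_{k}}-\widetilde{x}\|<\liminf_{k\to\infty}\|x_{n_{k}}-y\|\le\limsup_{n\to\infty}\|x_{n}-y\|=r(y),$$
which contradicts the hypothesis $r(y)<r(\widetilde{x})$. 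Hence $\widetilde{x}$ minimizes $r$ on $C$, i.e.\ $\widetilde{x}\in A(C,\{x_{n}\})$.

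To upgrade ``an'' asymptotic center to ``the'' asymptotic center I would invoke the standard fact that in a uniformly convex Banach space the asymptotic center of a bounded sequence with respect to a closed convex subset is unique: given two distinct minimizers, the parallelogram-type estimate furnished by the modulus of convexity forces the midpoint to have a strictly smaller $r$-value, a contradiction. Combined with the previous step this gives $A(C,\{x_{n}\})=\{\widetilde{x}\}$.

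The only real obstacle is the $\limsup$--$\liminf$ mismatch with Opial, which the subsequence trick resolves cleanly; uniform convexity plays no role in locating $\widetilde{x}$ and is used solely to exclude a second minimizer.
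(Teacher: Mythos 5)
The paper does not actually prove this statement; it is quoted from the book of Agarwal, O'Regan and Sahu \cite{AOS}, so there is no in-paper argument to compare against. Your proof is correct and is essentially the standard one: extracting a subsequence along which $\left\Vert x_{n_{k}}-\widetilde{x}\right\Vert \rightarrow r\left( \widetilde{x}\right) $ is exactly the right device to reconcile the $\limsup$ in the definition of $r$ with the $\liminf$ in Opial's condition, and the chain of inequalities you write down is valid. One remark: that chain in fact establishes $r\left( \widetilde{x}\right) <r\left( y\right) $ for \emph{every} $y\in C$ with $y\neq \widetilde{x}$, not merely for a hypothetical $y$ with $r\left( y\right) <r\left( \widetilde{x}\right) $. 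So the contradiction framing is unnecessary, and -- more to the point -- uniqueness of the minimizer is already a consequence of this strict inequality; the appeal to uniform convexity in your final paragraph is redundant. (Uniform convexity is what guarantees existence and uniqueness of asymptotic centers for a general bounded sequence, but here the candidate center is handed to you and Opial does all the work.) Two small points worth making explicit: $\widetilde{x}\in C$ because a closed convex set is weakly closed, and $\left\{ x_{n}\right\} $ is bounded because weakly convergent sequences are bounded, so $r$ is finite and the asymptotic center is well defined.
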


\begin{lemma}
\label{b}\cite{sahu1} Let $C$ be a nonempty closed convex subset of a
uniformly convex Banach space $X$ and $T:C\rightarrow C$ be a demicontinuous
nearly Lipschitzian mapping with sequence $\left\{ a_{n},\eta \left(
T^{n}\right) \right\} $ such that $\lim_{n\rightarrow \infty }\eta \left(
T^{n}\right) \leq 1$. If $\left\{ x_{n}\right\} $ is a bounded sequence in $%
C $ such that%
\begin{equation*}
\lim_{m\rightarrow \infty }\left( \lim_{n\rightarrow \infty }\left\Vert
x_{n}-T^{m}x_{n}\right\Vert \right) =0\text{ and }A\left( C,\left\{
x_{n}\right\} \right) =\left\{ \widetilde{x}\right\} ,
\end{equation*}%
then $\widetilde{x}$ is a fixed point of $T$.
\end{lemma}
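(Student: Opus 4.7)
The plan is to reduce everything to showing that $T^m\tilde{x} \to \tilde{x}$ strongly as $m\to\infty$, after which demicontinuity of $T$ immediately yields $T\tilde{x}=\tilde{x}$. Writing $r(y) = \limsup_{n\to\infty}\|x_n - y\|$ for the asymptotic radius functional on $C$, note that by hypothesis $r(\tilde{x}) = r(C,\{x_n\})$ and $\tilde{x}$ is the unique minimizer.

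First I would use the nearly Lipschitz inequality to estimate, for every $m,n \geq 1$,
\[
\|x_n - T^m\tilde{x}\| \;\leq\; \|x_n - T^m x_n\| \;+\; \eta(T^m)\bigl(\|x_n - \tilde{x}\| + a_m\bigr).
\]
Taking $\limsup$ in $n$ gives
\[
r(T^m\tilde{x}) \;\leq\; \lim_{n\to\infty}\|x_n - T^m x_n\| \;+\; \eta(T^m)\bigl(r(\tilde{x}) + a_m\bigr).
\]
Letting $m\to\infty$ and using the hypotheses $\lim_m(\lim_n\|x_n-T^m x_n\|)=0$, $a_m\to 0$, and $\limsup_m \eta(T^m)\leq 1$, I obtain $\limsup_{m\to\infty} r(T^m\tilde{x}) \leq r(\tilde{x})$. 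Since $T^m\tilde{x}\in C$ and $\tilde{x}$ is the \emph{unique} asymptotic center, the reverse inequality $r(T^m\tilde{x})\geq r(\tilde{x})$ is automatic, so $r(T^m\tilde{x})\to r(\tilde{x})$.

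Second, I invoke the classical fact that in a uniformly convex Banach space a bounded sequence has a unique asymptotic center over any closed convex set, and moreover any minimizing sequence for $r(\cdot)$ converges strongly to that center. Applied to $\{T^m\tilde{x}\}$, this delivers $T^m\tilde{x}\to\tilde{x}$ strongly in $X$. Finally, demicontinuity of $T$ gives $T^{m+1}\tilde{x} = T(T^m\tilde{x}) \rightharpoonup T\tilde{x}$, while strong (hence weak) convergence $T^{m+1}\tilde{x}\to\tilde{x}$ is already in hand; uniqueness of weak limits forces $T\tilde{x}=\tilde{x}$.

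The main obstacle I expect is the second step: the passage from convergence of the asymptotic radii $r(T^m\tilde{x})\to r(\tilde{x})$ to strong convergence $T^m\tilde{x}\to\tilde{x}$. This is precisely where uniform convexity of $X$ is essential, since in a general Banach space one cannot upgrade a minimizing sequence to a convergent one. A secondary technicality is the $\limsup$ bookkeeping for the term $\eta(T^m)(r(\tilde{x}) + a_m)$, because $\eta(T^m)$ is only assumed to satisfy $\lim_m \eta(T^m)\leq 1$ with no monotonicity, so one must argue uniformly that this product is asymptotically dominated by $r(\tilde{x})$.
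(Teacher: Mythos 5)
Your proof is correct. The paper itself offers no proof of this lemma --- it is quoted verbatim from the cited reference \cite{sahu1} --- and your argument is precisely the standard one from that source: show $\{T^m\widetilde{x}\}$ is a minimizing sequence for the asymptotic radius functional via the nearly Lipschitz estimate, use uniform convexity to upgrade this to strong convergence $T^m\widetilde{x}\rightarrow\widetilde{x}$, and finish with demicontinuity and uniqueness of weak limits. The only point worth a sentence in a written-up version is the degenerate case $r\left( C,\left\{ x_{n}\right\} \right) =0$, where the modulus-of-convexity argument for minimizing sequences is vacuous; there one gets $\left\Vert \widetilde{x}-T^{m}\widetilde{x}\right\Vert \leq r\left( T^{m}\widetilde{x}\right) \rightarrow 0$ directly, so the conclusion still holds. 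Your worry about the $\limsup$ bookkeeping for $\eta\left( T^{m}\right) \left( r\left( \widetilde{x}\right) +a_{m}\right)$ is unfounded: both factors are nonnegative and the hypothesis gives an actual limit $\lim_{m}\eta\left( T^{m}\right) \leq 1$, so the product's $\limsup$ is at most $r\left( \widetilde{x}\right)$ with no uniformity issue.
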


\begin{lemma}
\label{B}\cite{ceng} Let $V:C\rightarrow H$ be a $\gamma $-Lipschitzian
mapping and let $F:C\rightarrow H$ be a $L$-Lipschitzian and $\eta $%
-strongly monotone operator, then for $0\leq \rho \gamma <\mu \eta ,$%
\begin{equation*}
\left\langle \left( \mu F-\rho V\right) x-\left( \mu F-\rho V\right)
y,x-y\right\rangle \geq \left( \mu \eta -\rho \gamma \right) \left\Vert
x-y\right\Vert ^{2},\text{ }\forall x,y\in C.
\end{equation*}%
That is to say, the operator $\mu F-\rho V$ is $\mu \eta -\rho \gamma $%
-strongly monotone.
\end{lemma}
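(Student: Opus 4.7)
The plan is to verify the inequality by a direct computation that separates the contribution of $F$ (which gives the positive strong-monotonicity estimate) from the contribution of $V$ (which must be bounded below via Cauchy--Schwarz and the Lipschitz hypothesis).

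First I would expand the inner product by bilinearity:
\begin{equation*}
\bigl\langle (\mu F - \rho V)x - (\mu F - \rho V)y,\, x-y\bigr\rangle
= \mu\bigl\langle Fx-Fy,\, x-y\bigr\rangle - \rho\bigl\langle Vx-Vy,\, x-y\bigr\rangle.
\end{equation*}
For the first term, the $\eta$-strong monotonicity of $F$ immediately yields
$\mu\langle Fx-Fy, x-y\rangle \geq \mu\eta\|x-y\|^{2}$.

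Next, for the second term I would apply the Cauchy--Schwarz inequality in the form $\langle Vx-Vy, x-y\rangle \leq \|Vx-Vy\|\,\|x-y\|$ (so that $-\langle Vx-Vy, x-y\rangle \geq -\|Vx-Vy\|\,\|x-y\|$), and then invoke the $\gamma$-Lipschitz property of $V$ to conclude $-\rho\langle Vx-Vy, x-y\rangle \geq -\rho\gamma\|x-y\|^{2}$. Here the hypothesis $\rho \geq 0$ is what allows me to preserve the inequality direction after multiplying by $\rho$.

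Adding the two estimates gives the desired lower bound $(\mu\eta - \rho\gamma)\|x-y\|^{2}$. The condition $\rho\gamma < \mu\eta$ is not actually needed to establish the inequality itself; it only ensures that the constant on the right is strictly positive, so that the phrase ``$\mu F - \rho V$ is $(\mu\eta-\rho\gamma)$-strongly monotone'' is meaningful. I do not anticipate any real obstacle: the argument is a one-line splitting followed by two standard estimates, and the only thing to be careful about is tracking the sign when applying Cauchy--Schwarz to the $V$-term.
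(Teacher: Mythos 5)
Your argument is correct and is the standard proof of this fact: the paper itself states Lemma \ref{B} without proof, citing Ceng et al.\ \cite{ceng}, and the argument there is exactly this bilinear splitting, with the $\eta$-strong monotonicity of $F$ handling the $\mu F$ term and Cauchy--Schwarz plus the $\gamma$-Lipschitz bound (together with $\rho\geq 0$) handling the $\rho V$ term. Your side remarks are also accurate, namely that $0\leq\rho\gamma$ is what fixes the sign when estimating the $V$-term and that the strict inequality $\rho\gamma<\mu\eta$ serves only to make the resulting modulus $\mu\eta-\rho\gamma$ positive.
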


\begin{lemma}
\label{c}\cite{yamada} Let $C$ be a nonempty subset of a real Hilbert space $%
H.$ Suppose that $\lambda \in \left( 0,1\right) $ and $\mu >0$. Let $%
F:C\rightarrow H$ be a $L$-Lipschitzian and $\eta $-strongly monotone
operator. Define the mapping $G:C\rightarrow H$ by%
\begin{equation*}
Gx=x-\lambda \mu Fx\text{, }\forall x\in C.
\end{equation*}%
Then, $G$ is a contraction that provided $\mu <2\eta /L^{2}$. More
precisely, for $\mu \in \left( 0,2\eta /L^{2}\right) $,%
\begin{equation*}
\left\Vert Gx-Gy\right\Vert \leq \left( 1-\lambda \nu \right) \left\Vert
x-y\right\Vert \text{, }\forall x,y\in C,
\end{equation*}%
where $\nu =1-\sqrt{1-\mu \left( 2\eta -\mu L^{2}\right) .}$
\end{lemma}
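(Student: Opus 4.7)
The plan is to expand $\|Gx-Gy\|^2$, apply the two hypotheses on $F$, and then verify one elementary quadratic inequality in $\lambda$.

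First I would compute, directly from the definition of $G$,
\begin{equation*}
\|Gx - Gy\|^2 = \|x - y\|^2 - 2\lambda\mu\langle Fx - Fy, x-y\rangle + \lambda^2\mu^2\|Fx - Fy\|^2,
\end{equation*}
and then bound the cross term via the $\eta$-strong monotonicity of $F$ and the last term via the $L$-Lipschitz property of $F$ to obtain
\begin{equation*}
\|Gx - Gy\|^2 \leq \bigl(1 - 2\lambda\mu\eta + \lambda^2\mu^2 L^2\bigr)\|x-y\|^2.
\end{equation*}

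Next, I would set $\tau := \mu(2\eta - \mu L^2)$, which is positive precisely because of the hypothesis $\mu < 2\eta/L^2$, so that $\nu = 1 - \sqrt{1-\tau}$ is well defined and $(1-\nu)^2 = 1 - \tau$. The core of the argument is the pointwise inequality
\begin{equation*}
1 - 2\lambda\mu\eta + \lambda^2\mu^2 L^2 \leq (1 - \lambda\nu)^2 \qquad \text{for } \lambda \in [0,1].
\end{equation*}
The difference of the two sides is a quadratic in $\lambda$ vanishing at $\lambda = 0$ (trivially) and at $\lambda = 1$ (since both sides then equal $1-\tau$), so it factors as $c\,\lambda(\lambda-1)$ with leading coefficient $c = \mu^2 L^2 - \nu^2$. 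A short computation using $\nu^2 = 2 - \tau - 2\sqrt{1-\tau}$ simplifies this to $c = 2(\mu\eta - \nu)$, and the inequality $\mu\eta \geq \nu = 1-\sqrt{1-\tau}$ reduces, after squaring, to $L \geq \eta$. This last fact is the standard consequence of combining Lipschitz continuity and strong monotonicity, obtained by applying Cauchy--Schwarz to $\eta\|x-y\|^2 \leq \langle Fx - Fy, x-y\rangle \leq L\|x-y\|^2$. Hence $c \geq 0$, the quadratic $c\lambda(\lambda-1)$ is non-positive on $[0,1]$, and taking square roots yields $\|Gx - Gy\| \leq (1 - \lambda\nu)\|x-y\|$.

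The only real obstacle is the algebraic inequality in the middle step; the endpoint-matching trick (both sides agree at $\lambda = 0$ and $\lambda = 1$) bypasses any calculus and reduces the sign question to the non-negativity of $\mu\eta - \nu$, and hence to the basic relation $L \geq \eta$.
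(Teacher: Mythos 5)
The paper does not prove this lemma --- it is quoted from Yamada \cite{yamada} --- so there is no internal proof to compare against; your argument is correct and is in substance the standard one: expand $\left\Vert Gx-Gy\right\Vert ^{2}$, bound it by $\left( 1-2\lambda \mu \eta +\lambda ^{2}\mu ^{2}L^{2}\right) \left\Vert x-y\right\Vert ^{2}$, and verify $1-2\lambda \mu \eta +\lambda ^{2}\mu ^{2}L^{2}\leq \left( 1-\lambda \nu \right) ^{2}$ via the identity $\mu ^{2}L^{2}-\nu ^{2}=2\left( \mu \eta -\nu \right) $ together with $\nu \leq \mu \eta $. The only point worth tightening is the squaring step for $\mu \eta \geq \nu $: squaring $\sqrt{1-\tau }\geq 1-\mu \eta $ is legitimate only when $1-\mu \eta \geq 0$ (the other case being trivial); the cleanest route is $1-\tau =\left( 1-\mu \eta \right) ^{2}+\mu ^{2}\left( L^{2}-\eta ^{2}\right) \geq \left( 1-\mu \eta \right) ^{2}$, which uses $L\geq \eta $ and covers both cases at once (and $L\geq \eta $ itself requires $C$ to contain two distinct points, the conclusion being vacuous otherwise).
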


\begin{lemma}
\label{Y}\cite{xu} Assume that $\left\{ x_{n}\right\} $ is a sequence of
nonnegative real numbers such that%
\begin{equation*}
x_{n+1}\leq \left( 1-\alpha _{n}\right) x_{n}+\alpha _{n}\beta _{n},\text{ }%
\forall n\geq 0
\end{equation*}%
where $\left\{ \alpha _{n}\right\} $ \ and $\left\{ \beta _{n}\right\} $ are
sequences of real numbers which satisfy the following conditions: 
\begin{eqnarray*}
&\text{(i)}&\left\{ \alpha _{n}\right\} \subset \left[ 0,1\right] \text{ and 
}\tsum_{n=1}^{\infty }\alpha _{n}=\infty \text{,\ \ \ \ \ \ \ \ \ \ \ \ \ \
\ \ \ \ \ \ \ \ \ \ \ \ \ \ \ \ \ \ \ \ \ \ \ \ \ \ \ \ \ \ \ \ \ \ \ \ \ \
\ } \\
&\text{(ii)}&\text{ either }\limsup_{n\rightarrow \infty }\beta _{n}\leq 0%
\text{ or }\sum_{n=1}^{\infty }\alpha _{n}\beta _{n}<\infty .
\end{eqnarray*}%
Then $\lim_{n\rightarrow \infty }x_{n}=0.$
\end{lemma}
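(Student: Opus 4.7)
The plan is to handle the two alternatives in condition (ii) separately, since the underlying idea in each case is quite different (though both rely on the divergence of $\sum \alpha_n$).

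For the alternative $\limsup_{n\to\infty} \beta_n \le 0$, I would use a standard $\varepsilon$--$N$ argument. Fix $\varepsilon>0$, choose $N$ so large that $\beta_n\le\varepsilon$ for all $n\ge N$, and iterate the recursion from $N$: this yields
\[
x_{n+1}\le \Bigl(\prod_{k=N}^{n}(1-\alpha_k)\Bigr)\,x_N+\varepsilon\Bigl(1-\prod_{k=N}^{n}(1-\alpha_k)\Bigr),\qquad n\ge N.
\]
The elementary identity $\sum_{n=1}^\infty \alpha_n=\infty\iff \prod_{n=1}^\infty(1-\alpha_n)=0$ (valid for $\alpha_n\in[0,1]$) then gives $\limsup_{n\to\infty}x_n\le\varepsilon$, and letting $\varepsilon\downarrow 0$ yields $x_n\to 0$.

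For the alternative $\sum_{n=1}^\infty \alpha_n\beta_n<\infty$, the same unrolling gives
\[
x_{n+1}\le \Bigl(\prod_{k=0}^{n}(1-\alpha_k)\Bigr)x_0+\sum_{k=0}^{n}\alpha_k\beta_k\prod_{j=k+1}^{n}(1-\alpha_j).
\]
The first term vanishes because $\sum \alpha_n=\infty$. For the second term I would split it at some index $N$: the tail $\sum_{k\ge N}|\alpha_k\beta_k|$ is arbitrarily small by Cauchy's criterion (noting that since $x_n\ge 0$ and the recursion is an upper bound, we may replace $\beta_n$ by $\beta_n^+$ in the worst case, so absolute convergence is not actually needed---only the tail going to $0$), while the head $\sum_{k<N}\alpha_k\beta_k\prod_{j=k+1}^n(1-\alpha_j)$ tends to $0$ as $n\to\infty$ for each fixed $N$, again because the finite number of products $\prod_{j=k+1}^n(1-\alpha_j)$ each tend to $0$.

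The main obstacle I anticipate is the bookkeeping in Case~2 when $\beta_n$ is allowed to change sign: one has to argue that the negative contributions only help (they lower the upper bound on $x_{n+1}$), so that the relevant object to control is really $\sum \alpha_k\beta_k^+$ on tails. Once this observation is in place, both cases reduce to the product-to-zero identity noted above, which is the only nontrivial ingredient. Since $\{x_n\}$ is nonnegative, no separate argument is needed to obtain $\liminf x_n\ge 0$.
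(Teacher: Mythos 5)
The paper itself does not prove this lemma; it is imported verbatim from Xu, and the standard proof there is exactly your strategy: unroll the recursion and use the equivalence $\sum\alpha_n=\infty\iff\prod(1-\alpha_n)=0$ for $\alpha_n\in[0,1]$. Your Case 1 is correct as written (equivalently: $u_n=x_n-\varepsilon$ satisfies $u_{n+1}\le(1-\alpha_n)u_n$ for $n\ge N$, whence $\limsup x_n\le\varepsilon$; note this part does not even use nonnegativity of the sequence, a fact worth recording for the fix below). The first and third pieces of Case 2 (the term $\prod_{k=0}^{n}(1-\alpha_k)x_0$ and the head of the sum) are also fine.

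The gap is in the tail estimate of Case 2. Your parenthetical remark is where it breaks: replacing $\beta_n$ by $\beta_n^{+}$ does give a valid majorizing recursion, but it destroys the hypothesis, since convergence of $\sum\alpha_n\beta_n$ does not imply convergence of $\sum\alpha_n\beta_n^{+}$ (take $\alpha_n\beta_n=(-1)^n/n$). Consequently neither $\sum_{k\ge N}|\alpha_k\beta_k|$ nor $\sum_{k\ge N}\alpha_k\beta_k^{+}$ is controlled by Cauchy's criterion, which for a conditionally convergent series only bounds $\bigl|\sum_{k=m}^{n}\alpha_k\beta_k\bigr|$. There are two honest ways to close this. (a) Read the hypothesis as $\sum|\alpha_n\beta_n|<\infty$, which is how Xu states it; then your tail estimate is immediate and the $\beta^{+}$ remark is unnecessary. (b) If you want the conditionally convergent version, either apply Abel's inequality, using that $k\mapsto\prod_{j=k+1}^{n}(1-\alpha_j)$ is nondecreasing with values in $[0,1]$, to bound the tail by $2\sup_{m\ge N}\bigl|\sum_{k=N}^{m}\alpha_k\beta_k\bigr|$; or, more cleanly, set $t_n=\sum_{k\ge n}\alpha_k\beta_k$, so that $\alpha_n\beta_n=t_n-t_{n+1}$ and $z_n=x_n+t_n$ satisfies $z_{n+1}\le(1-\alpha_n)z_n+\alpha_n t_n$ with $t_n\to0$; your Case 1 argument then gives $\limsup z_n\le0$, hence $\limsup x_n\le\limsup z_n+\limsup(-t_n)\le0$, and nonnegativity of $x_n$ finishes. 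Either repair makes the proof complete; for the purposes of this paper only the $\limsup\beta_n\le0$ alternative is ever invoked, and that case you have handled correctly.
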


\section{Main result}

\begin{theorem}
\label{X} Let $C$ be a nonempty closed convex subset of a real Hilbert space 
$H.$ Let $S$ be a nonexpansive mapping and $T$ be a demicontinuous nearly
nonexpansive mapping on $C$ with respect to the sequence $\left\{
a_{n}\right\} $ such that $Fix\left( T\right) \neq \emptyset $. Let $%
V:C\rightarrow H$ be a $\gamma $-Lipschitzian mapping, $F:C\rightarrow H$ be
a $L$-Lipschitzian and $\eta $-strongly monotone operator such that the
coefficients satisfy $0<\mu <\frac{2\eta }{L^{2}}$, $0\leq \rho \gamma <\nu $%
, where $\nu =1-\sqrt{1-\mu \left( 2\eta -\mu L^{2}\right) }$. For an
arbitrarily initial value $x_{1}\in C,$ consider the sequence $\left\{
x_{n}\right\} $ in $C$ generated by%
\begin{equation}
\left\{ 
\begin{array}{l}
y_{n}=\beta _{n}Sx_{n}+\left( 1-\beta _{n}\right) x_{n} \\ 
x_{n+1}=P_{C}\left[ \alpha _{n}\rho Vx_{n}+\left( I-\alpha _{n}\mu F\right)
T^{n}y_{n}\right] ,\text{ }\forall n\geq 1,%
\end{array}%
\right.   \label{4}
\end{equation}%
where $\left\{ \alpha _{n}\right\} $ and $\left\{ \beta _{n}\right\} $ are
sequences in $\left[ 0,1\right] $ satisfying the conditions:%
\begin{eqnarray*}
&(i)&\lim_{n\rightarrow \infty }\alpha _{n}=0\text{, and }%
\tsum_{n=1}^{\infty }\alpha _{n}=\infty \text{;} \\
&(ii)&\lim_{n\rightarrow \infty }\frac{a_{n}}{\alpha _{n}}=0\text{, }%
\lim_{n\rightarrow \infty }\frac{\beta _{n}}{\alpha _{n}}=0\text{, }%
\lim_{n\rightarrow \infty }\frac{\left\vert \alpha _{n}-\alpha
_{n-1}\right\vert }{\alpha _{n}}=0\text{ and } \\
&&\lim_{n\rightarrow \infty }\frac{\left\vert \beta _{n}-\beta
_{n-1}\right\vert }{\alpha _{n}}=0\text{;} \\
&(iii)&\lim_{n\rightarrow \infty }\left\Vert T^{n}x-T^{n-1}x\right\Vert =0%
\text{ and }\lim_{n\rightarrow \infty }\frac{\left\Vert
T^{n}x-T^{n-1}x\right\Vert }{\alpha _{n}}=0,\forall x\in C\text{. \ \ \ \ \
\ \ }
\end{eqnarray*}%
Then, the sequence $\left\{ x_{n}\right\} $ converges strongly to $x^{\ast
}\in Fix\left( T\right) $, where $x^{\ast }$\ is the unique solution of the
variational inequality (\ref{v2}).
\end{theorem}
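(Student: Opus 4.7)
The plan is to follow the standard Halpern--Marino--Xu strategy, adapted to accommodate the near-nonexpansivity of $T$ and the implicit Mann-type term $y_n=\beta_n Sx_n+(1-\beta_n)x_n$. First, uniqueness of a candidate limit $x^{\ast}$ in (\ref{v2}) is immediate from the strong monotonicity of $\mu F-\rho V$ supplied by Lemma \ref{B}. For boundedness, I fix $p\in Fix(T)$, apply nonexpansivity of $P_C$ and Lemma \ref{c} to $I-\alpha_n\mu F$, and bound $\|T^n y_n-p\|\le\|x_n-p\|+\beta_n\|Sp-p\|+a_n$; absorbing $\beta_n$ and $a_n$ into $\alpha_n$ via condition (ii) yields a recursion
\[
\|x_{n+1}-p\|\le(1-\alpha_n(\nu-\rho\gamma))\|x_n-p\|+\alpha_n(\nu-\rho\gamma)M
\]
for a suitable constant $M$, whence $\{x_n\}$ and the derived sequences $\{y_n\}$, $\{T^ny_n\}$, $\{Vx_n\}$, $\{FT^ny_n\}$ are bounded.

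Next I establish asymptotic regularity. Setting $v_n:=\alpha_n\rho Vx_n+(I-\alpha_n\mu F)T^n y_n$, expanding $v_n-v_{n-1}$ by telescoping, and invoking Lemma \ref{c} together with $\|T^n y_n-T^n y_{n-1}\|\le\|y_n-y_{n-1}\|+a_n$, I arrive at
\[
\|x_{n+1}-x_n\|\le(1-\alpha_n(\nu-\rho\gamma))\|x_n-x_{n-1}\|+\alpha_n\sigma_n,
\]
where $\sigma_n$ is a linear combination of $|\alpha_n-\alpha_{n-1}|/\alpha_n$, $|\beta_n-\beta_{n-1}|/\alpha_n$, $a_n/\alpha_n$, and $\|T^ny_{n-1}-T^{n-1}y_{n-1}\|/\alpha_n$; conditions (ii) and (iii) give $\sigma_n\to0$, so Lemma \ref{Y} yields $\|x_{n+1}-x_n\|\to0$. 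The projection contraction $\|x_{n+1}-T^n y_n\|=O(\alpha_n)\to0$ and $\|T^n y_n-T^n x_n\|\le\beta_n\|Sx_n-x_n\|+a_n\to0$ then give $\|x_n-T^n x_n\|\to0$.

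Finally, I pick a subsequence $\{x_{n_k}\}$ achieving $\limsup_n\langle(\rho V-\mu F)x^{\ast},x_n-x^{\ast}\rangle$, extract a weak cluster point $x_{n_k}\rightharpoonup\tilde x$, note $A(C,\{x_{n_k}\})=\{\tilde x\}$ by Theorem \ref{C} (since $H$ satisfies Opial), and verify the double-limit hypothesis of Lemma \ref{b}. Using $T^{n_k}=T^m\circ T^{n_k-m}$,
\[
\|x_{n_k}-T^m x_{n_k}\|\le\|x_{n_k}-T^{n_k}x_{n_k}\|+\|T^{n_k-m}x_{n_k}-x_{n_k}\|+a_m,
\]
and a telescoping argument based on condition (iii) together with $\|x_n-T^nx_n\|\to0$ yields $\lim_m\lim_n\|x_{n_k}-T^m x_{n_k}\|=0$, so Lemma \ref{b} with demicontinuity gives $\tilde x\in Fix(T)$. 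Evaluating (\ref{v2}) at $\tilde x$ and using $x_{n_k}\rightharpoonup\tilde x$ forces $\limsup_n\langle(\rho V-\mu F)x^{\ast},x_n-x^{\ast}\rangle\le0$, and expanding $\|x_{n+1}-x^{\ast}\|^2$ from the iteration and using (\ref{c1}) produces
\[
\|x_{n+1}-x^{\ast}\|^2\le(1-\alpha_n(\nu-\rho\gamma))\|x_n-x^{\ast}\|^2+\alpha_n\delta_n,\quad\limsup_n\delta_n\le0,
\]
so a final application of Lemma \ref{Y} delivers $x_n\to x^{\ast}$. I expect the main obstacle to be the double-limit verification in the third step: the nearly nonexpansive estimates degrade by an additive $a_m$ at every application of $T$, and controlling $\|T^{n_k-m}x_{n_k}-x_{n_k}\|$ uniformly in $n_k$ for fixed $m$ is delicate---this is precisely the reason condition (iii) is imposed in its pointwise, $\alpha_n$-normalized form.
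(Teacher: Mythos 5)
Your proposal follows essentially the same five-step route as the paper: the same boundedness and asymptotic-regularity recursions closed by Lemma \ref{Y}, the same passage to a weak cluster point via Opial's condition, Theorem \ref{C} and Lemma \ref{b} to conclude $\widetilde{x}\in Fix(T)$, and the same final estimate using (\ref{c1}) and Lemma \ref{Y}. The only substantive difference is in the double-limit step, where you telescope via $T^{n}=T^{m}\circ T^{n-m}$ (a fixed number $m$ of increments controlled by condition (iii)) rather than the paper's telescope from $T^{n}y_{n}$ all the way down to $T^{m}y_{n}$; this is, if anything, a cleaner organization of the same idea, and it inherits the same reliance on applying the pointwise condition (iii) along a varying sequence that the paper's own argument does.
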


\begin{proof}
Since the mapping $\mu F-\rho V$ is a strongly monotone operator from Lemma %
\ref{B},\ it is known that the variational inequality (\ref{v2}) has an
unique solution. Let denote this solution by $x^{\ast }\in Fix\left(
T\right) $. Now, we divide our proof into five steps.

\textbf{Step 1. }First we show that the sequence $\left\{ x_{n}\right\} $
generated by (\ref{4}) is bounded. From condition (ii), without loss of
generality, we may suppose that $\beta _{n}\leq \alpha _{n}$, for all $n\geq
1$. Hence, we get $\lim_{n\rightarrow \infty }\beta _{n}=0$.\ Let $p\in
Fix\left( T\right) $ and $t_{n}=\alpha _{n}\rho Vx_{n}+\left( I-\alpha
_{n}\mu F\right) T^{n}y_{n}$.\ Then we have%
\begin{eqnarray}
\left\Vert y_{n}-p\right\Vert &=&\left\Vert \beta _{n}Sx_{n}+\left( 1-\beta
_{n}\right) x_{n}-p\right\Vert  \notag \\
&\leq &\left( 1-\beta _{n}\right) \left\Vert x_{n}-p\right\Vert +\beta
_{n}\left\Vert Sx_{n}-p\right\Vert  \notag \\
&\leq &\left( 1-\beta _{n}\right) \left\Vert x_{n}-p\right\Vert +\beta
_{n}\left\Vert Sx_{n}-Sp\right\Vert +\beta _{n}\left\Vert Sp-p\right\Vert 
\notag \\
&\leq &\left\Vert x_{n}-p\right\Vert +\beta _{n}\left\Vert Sp-p\right\Vert ,
\label{2}
\end{eqnarray}%
and by using the definition of nearly nonexpansive mapping and Lemma \ref{c}%
, we obtain%
\begin{eqnarray}
\left\Vert x_{n+1}-p\right\Vert &=&\left\Vert P_{C}t_{n}-P_{C}p\right\Vert 
\notag \\
&\leq &\left\Vert t_{n}-p\right\Vert  \notag \\
&=&\left\Vert \alpha _{n}\rho Vx_{n}+\left( I-\alpha _{n}\mu F\right)
T^{n}y_{n}-p\right\Vert  \notag \\
&\leq &\alpha _{n}\left\Vert \rho Vx_{n}-\mu Fp\right\Vert +\left\Vert
\left( I-\alpha _{n}\mu F\right) T^{n}y_{n}-\left( I-\alpha _{n}\mu F\right)
T^{n}p\right\Vert  \notag \\
&\leq &\alpha _{n}\rho \gamma \left\Vert x_{n}-p\right\Vert +\alpha
_{n}\left\Vert \rho Vp-\mu Fp\right\Vert  \notag \\
&&+\left( 1-\alpha _{n}\nu \right) \left( \left\Vert y_{n}-p\right\Vert
+a_{n}\right) .  \label{3}
\end{eqnarray}%
From (\ref{2}) and (\ref{3}), we get%
\begin{eqnarray*}
\left\Vert x_{n+1}-p\right\Vert &\leq &\alpha _{n}\rho \gamma \left\Vert
x_{n}-p\right\Vert +\alpha _{n}\left\Vert \rho Vp-\mu Fp\right\Vert \\
&&+\left( 1-\alpha _{n}\nu \right) \left( \left\Vert x_{n}-p\right\Vert
+\beta _{n}\left\Vert Sp-p\right\Vert +a_{n}\right) \\
&\leq &\left( 1-\alpha _{n}\left( \nu -\rho \gamma \right) \right)
\left\Vert x_{n}-p\right\Vert \\
&&+\alpha _{n}\left( \nu -\rho \gamma \right) \left[ \frac{1}{\left( \nu
-\rho \gamma \right) }\left( \left\Vert \rho Vp-\mu Fp\right\Vert
+\left\Vert Sp-p\right\Vert +\frac{a_{n}}{\alpha _{n}}\right) \right] .
\end{eqnarray*}%
From condition (ii), this sequence is bounded, and so we can write%
\begin{equation*}
\left\Vert x_{n+1}-p\right\Vert \leq \left( 1-\alpha _{n}\left( \nu -\rho
\gamma \right) \right) \left\Vert x_{n}-p\right\Vert +\alpha _{n}\left( \nu
-\rho \gamma \right) M,
\end{equation*}%
where%
\begin{equation*}
\frac{1}{\left( \nu -\rho \gamma \right) }\left( \left\Vert \rho Vp-\mu
Fp\right\Vert +\left\Vert Sp-p\right\Vert +\frac{a_{n}}{\alpha _{n}}\right)
\leq M\text{, }\forall n\geq 1
\end{equation*}%
Therefore, by induction, we get%
\begin{equation*}
\left\Vert x_{n+1}-p\right\Vert \leq \max \left\{ \left\Vert
x_{1}-p\right\Vert \text{, }M\right\} .
\end{equation*}%
Hence, we obtain that $\left\{ x_{n}\right\} $ is bounded. So, the sequences 
$\left\{ y_{n}\right\} $, $\left\{ Tx_{n}\right\} $, $\left\{ Sx_{n}\right\} 
$, $\left\{ Vx_{n}\right\} $ and $\left\{ FTy_{n}\right\} $ are bounded too.

\textbf{Step 2.} Secondly, we show that $\lim_{n\rightarrow \infty
}\left\Vert x_{n+1}-x_{n}\right\Vert =0$. By using the iteration (\ref{4}),
we have%
\begin{eqnarray}
\left\Vert y_{n}-y_{n-1}\right\Vert &=&\left\Vert \beta _{n}Sx_{n}+\left(
1-\beta _{n}\right) x_{n}-\beta _{n-1}Sx_{n-1}-\left( 1-\beta _{n-1}\right)
x_{n-1}\right\Vert  \notag \\
&\leq &\beta _{n}\left\Vert Sx_{n}-Sx_{n-1}\right\Vert +\left( 1-\beta
_{n}\right) \left\Vert x_{n}-x_{n-1}\right\Vert  \notag \\
&&+\left\vert \beta _{n}-\beta _{n-1}\right\vert \left( \left\Vert
Sx_{n-1}\right\Vert +\left\Vert x_{n-1}\right\Vert \right)  \notag \\
&\leq &\left\Vert x_{n}-x_{n-1}\right\Vert +\left\vert \beta _{n}-\beta
_{n-1}\right\vert M_{1},  \label{b1}
\end{eqnarray}%
where $M_{1}$ is a constant such that $\sup_{n\geq 1}\left\{ \left\Vert
Sx_{n}\right\Vert +\left\Vert x_{n}\right\Vert \right\} \leq M_{1},$ and%
\begin{eqnarray}
\left\Vert x_{n+1}-x_{n}\right\Vert &\leq &\left\Vert
P_{C}t_{n}-P_{C}t_{n-1}\right\Vert  \notag \\
&\leq &\left\Vert \alpha _{n}\rho Vx_{n}+\left( I-\alpha _{n}\mu F\right)
T^{n}y_{n}\right.  \notag \\
&&\left. -\alpha _{n-1}\rho Vx_{n-1}-\left( I-\alpha _{n-1}\mu F\right)
T^{n-1}y_{n-1}\right\Vert  \notag \\
&\leq &\left\Vert \alpha _{n}\rho V\left( x_{n}-x_{n-1}\right) +\left(
\alpha _{n}-\alpha _{n-1}\right) \rho Vx_{n-1}\right.  \notag \\
&&+\left( I-\alpha _{n}\mu F\right) T^{n}y_{n}-\left( I-\alpha _{n}\mu
F\right) T^{n}y_{n-1}  \notag \\
&&\left. +T^{n}y_{n-1}-T^{n-1}y_{n-1}+\alpha _{n-1}\mu
FT^{n-1}y_{n-1}-\alpha _{n}\mu FT^{n}y_{n-1}\right\Vert  \notag \\
&\leq &\alpha _{n}\rho \gamma \left\Vert x_{n}-x_{n-1}\right\Vert +\rho
\left\vert \alpha _{n}-\alpha _{n-1}\right\vert \left\Vert
Vx_{n-1}\right\Vert  \notag \\
&&+\left( 1-\alpha _{n}\nu \right) \left\Vert
T^{n}y_{n}-T^{n}y_{n-1}\right\Vert +\left\Vert
T^{n}y_{n-1}-T^{n-1}y_{n-1}\right\Vert  \notag \\
&&+\mu \left\Vert \alpha _{n-1}FT^{n-1}y_{n-1}-\alpha
_{n}FT^{n}y_{n-1}\right\Vert  \notag \\
&\leq &\alpha _{n}\rho \gamma \left\Vert x_{n}-x_{n-1}\right\Vert +\rho
\left\vert \alpha _{n}-\alpha _{n-1}\right\vert \left\Vert
Vx_{n-1}\right\Vert  \notag \\
&&+\left( 1-\alpha _{n}\nu \right) \left[ \left\Vert
y_{n}-y_{n-1}\right\Vert +a_{n}\right] +\left\Vert
T^{n}y_{n-1}-T^{n-1}y_{n-1}\right\Vert  \notag \\
&&+\mu \left\Vert \alpha _{n-1}\left( FT^{n-1}y_{n-1}-FT^{n}y_{n-1}\right)
\right.  \notag \\
&&\left. -\left( \alpha _{n}-\alpha _{n-1}\right) FT^{n}y_{n-1}\right\Vert .
\label{b2}
\end{eqnarray}%
So, from (\ref{b1}) and (\ref{b2}), we get%
\begin{eqnarray*}
\left\Vert x_{n+1}-x_{n}\right\Vert &\leq &\alpha _{n}\rho \gamma \left\Vert
x_{n}-x_{n-1}\right\Vert +\rho \left\vert \alpha _{n}-\alpha
_{n-1}\right\vert \left\Vert Vx_{n-1}\right\Vert \\
&&+\left( 1-\alpha _{n}\nu \right) \left\Vert x_{n}-x_{n-1}\right\Vert
+\left( 1-\alpha _{n}\nu \right) \left\vert \beta _{n}-\beta
_{n-1}\right\vert M_{1} \\
&&+\left( 1-\alpha _{n}\nu \right) a_{n}+\left\Vert
T^{n}y_{n-1}-T^{n-1}y_{n-1}\right\Vert \\
&&+\mu \alpha _{n-1}L\left\Vert T^{n}y_{n-1}-T^{n-1}y_{n-1}\right\Vert
+\left\vert \alpha _{n}-\alpha _{n-1}\right\vert \left\Vert
FT^{n}y_{n-1}\right\Vert \\
&\leq &\left( 1-\alpha _{n}\left( \nu -\rho \gamma \right) \right)
\left\Vert x_{n}-x_{n-1}\right\Vert \\
&&+\left\vert \alpha _{n}-\alpha _{n-1}\right\vert \left( \rho \left\Vert
Vx_{n-1}\right\Vert +\left\Vert FT^{n}y_{n-1}\right\Vert \right) \\
&&+\left( 1+\mu \alpha _{n-1}L\right) \left\Vert
T^{n}y_{n-1}-T^{n-1}y_{n-1}\right\Vert +\left\vert \beta _{n}-\beta
_{n-1}\right\vert M_{1}+a_{n} \\
&\leq &\left( 1-\alpha _{n}\left( \nu -\rho \gamma \right) \right)
\left\Vert x_{n}-x_{n-1}\right\Vert +\alpha _{n}\left( \nu -\rho \gamma
\right) \delta _{n},
\end{eqnarray*}%
where%
\begin{equation*}
\delta _{n}=\frac{1}{\left( \nu -\rho \gamma \right) }\left[ 
\begin{array}{c}
\left( 1+\mu \alpha _{n-1}L\right) \frac{\left\Vert
T^{n}y_{n-1}-T^{n-1}y_{n-1}\right\Vert }{\alpha _{n}} \\ 
+\left( \left\vert \frac{\alpha _{n}-\alpha _{n-1}}{\alpha _{n}}\right\vert
+\left\vert \frac{\beta _{n}-\beta _{n-1}}{\alpha _{n}}\right\vert \right)
M_{2}+\frac{a_{n}}{\alpha _{n}}%
\end{array}%
\right] ,
\end{equation*}%
and%
\begin{equation*}
\sup_{n\geq 1}\left\{ \rho \left\Vert Vx_{n-1}\right\Vert +\left\Vert
FT_{n}y_{n-1}\right\Vert ,\text{ }M_{1}\right\} \leq M_{2}.
\end{equation*}%
By using the conditions (ii) and (iii), since $\limsup_{n\rightarrow \infty
}\delta _{n}\leq 0$, it follows from Lemma \ref{Y} that%
\begin{equation}
\left\Vert x_{n+1}-x_{n}\right\Vert \rightarrow 0\text{, as }n\rightarrow
\infty .  \label{1}
\end{equation}

\textbf{Step 3.} Next, we show that $\lim_{n\rightarrow \infty }\left\Vert
x_{n}-Tx_{n}\right\Vert =0$. For $n\geq m\geq 1$, we get%
\begin{eqnarray}
\left\Vert T^{n}y_{n}-T^{m}x_{n}\right\Vert  &\leq &\left\Vert
T^{n}y_{n}-T^{n-1}y_{n}\right\Vert +\left\Vert
T^{n-1}y_{n}-T^{n-2}y_{n}\right\Vert   \notag \\
&&+\cdots +\left\Vert T^{m}y_{n}-T^{m}x_{n}\right\Vert   \notag \\
&\leq &\left\Vert T^{n}y_{n}-T^{n-1}y_{n}\right\Vert +\left\Vert
T^{n-1}y_{n}-T^{n-2}y_{n}\right\Vert   \notag \\
&&+\cdots +\left\Vert y_{n}-x_{n}\right\Vert +a_{m}\text{,}  \label{d1}
\end{eqnarray}%
and so%
\begin{eqnarray}
\left\Vert x_{n+1}-T^{m}x_{n}\right\Vert  &=&\left\Vert
P_{C}t_{n}-P_{C}T^{m}x_{n}\right\Vert   \notag \\
&\leq &\left\Vert \alpha _{n}\rho Vx_{n}+\left( I-\alpha _{n}\mu F\right)
T^{n}y_{n}-T^{m}x_{n}\right\Vert   \notag \\
&\leq &\alpha _{n}\left\Vert \rho Vx_{n}-\mu FT^{n}y_{n}\right\Vert
+\left\Vert T^{n}y_{n}-T^{m}x_{n}\right\Vert   \notag \\
&\leq &\alpha _{n}\left\Vert \rho Vx_{n}-\mu FT^{n}y_{n}\right\Vert
+\left\Vert T^{n}y_{n}-T^{n-1}y_{n}\right\Vert   \notag \\
&&+\left\Vert T^{n-1}y_{n}-T^{n-2}y_{n}\right\Vert +\cdots +\left\Vert
y_{n}-x_{n}\right\Vert +a_{m}\text{.}  \label{d2}
\end{eqnarray}%
Hence, we obtain from (\ref{d1}) and (\ref{d2}) 
\begin{eqnarray}
\left\Vert x_{n}-T^{m}x_{n}\right\Vert  &\leq &\left\Vert
x_{n}-x_{n+1}\right\Vert +\left\Vert x_{n+1}-T^{m}x_{n}\right\Vert   \notag
\\
&\leq &\left\Vert x_{n}-x_{n+1}\right\Vert +\alpha _{n}\left\Vert \rho
Vx_{n}-\mu FT^{n}y_{n}\right\Vert   \notag \\
&&+\left\Vert T^{n}y_{n}-T^{n-1}y_{n}\right\Vert +\left\Vert
T^{n-1}y_{n}-T^{n-2}y_{n}\right\Vert   \notag \\
&&+\cdots +\left\Vert y_{n}-x_{n}\right\Vert +a_{m}  \notag \\
&\leq &\left\Vert x_{n}-x_{n+1}\right\Vert +\alpha _{n}\left\Vert \rho
Vx_{n}-\mu FT^{n}y_{n}\right\Vert   \notag \\
&&+\left\Vert T^{n}y_{n}-T^{n-1}y_{n}\right\Vert +\left\Vert
T^{n-1}y_{n}-T^{n-2}y_{n}\right\Vert   \notag \\
&&+\cdots +\beta _{n}\left\Vert Sx_{n}-x_{n}\right\Vert +a_{m}  \label{c2}
\end{eqnarray}%
Since $\left\Vert \rho Vx_{n}-\mu FT^{n}y_{n}\right\Vert $ and $\left\Vert
Sx_{n}-x_{n}\right\Vert $ are bounded,\ it follows from (\ref{1}\textbf{),} (%
\ref{c2}\textbf{), }condition (i)\textbf{\ }and condition (iii) that 
\begin{equation}
\lim_{m\rightarrow \infty }\left( \lim_{n\rightarrow \infty }\left\Vert
x_{n}-T^{m}x_{n}\right\Vert \right) =0\text{.}  \label{6}
\end{equation}%
Combining (\ref{6}\textbf{) }and condition (iii), we have 
\begin{equation*}
\left\Vert x_{n}-Tx_{n}\right\Vert \leq \left\Vert
x_{n}-T^{m}x_{n}\right\Vert +\left\Vert T^{m}x_{n}-Tx_{n}\right\Vert
\rightarrow 0\text{, as }n,m\rightarrow \infty .
\end{equation*}

\textbf{Step 4.} Now, we show that $\limsup_{n\rightarrow \infty
}\left\langle \left( \rho V-\mu F\right) x^{\ast },x_{n}-x^{\ast
}\right\rangle \leq 0$, where $x^{\ast }$\ is the unique solution of the
variational inequality (\ref{v2}). Since the sequence $\left\{ x_{n}\right\} 
$ is bounded, it has a weak convergent subsequence $\left\{
x_{n_{k}}\right\} $ such that%
\begin{equation*}
\limsup_{n\rightarrow \infty }\left\langle \left( \rho V-\mu F\right)
x^{\ast },x_{n}-x^{\ast }\right\rangle =\limsup_{k\rightarrow \infty
}\left\langle \left( \rho V-\mu F\right) x^{\ast },x_{n_{k}}-x^{\ast
}\right\rangle .
\end{equation*}%
Let $x_{n_{k}}\rightharpoonup \widetilde{x}$, as\textbf{\ }$k\rightarrow
\infty $\textbf{. }Then, Opial's condition guarantee that the weakly
subsequential limit of $\left\{ x_{n}\right\} $ is unique. Hence, this
implies that $x_{n}\rightharpoonup \widetilde{x}$, as\textbf{\ }$%
n\rightarrow \infty $.\ So, it follows from (\ref{6}\textbf{), }Theorem \ref%
{C} and Lemma \ref{b} that\textbf{\ }$\widetilde{x}\in Fix\left( T\right) $%
\textbf{.} Therefore%
\begin{equation*}
\limsup_{n\rightarrow \infty }\left\langle \left( \rho V-\mu F\right)
x^{\ast },x_{n}-x^{\ast }\right\rangle =\left\langle \left( \rho V-\mu
F\right) x^{\ast },\widetilde{x}-x^{\ast }\right\rangle \leq 0.
\end{equation*}

\textbf{Step 5:} Finally, we show that the sequence $\left\{ x_{n}\right\} $
converges strongly to $x^{\ast }$. By using the inequality (\ref{c1}\textbf{)%
}, we have%
\begin{eqnarray*}
\left\Vert x_{n+1}-x^{\ast }\right\Vert ^{2} &=&\left\langle
P_{C}t_{n}-x^{\ast },x_{n+1}-x^{\ast }\right\rangle  \\
&=&\left\langle P_{C}t_{n}-t_{n},x_{n+1}-x^{\ast }\right\rangle
+\left\langle t_{n}-x^{\ast },x_{n+1}-x^{\ast }\right\rangle  \\
&\leq &\left\langle \alpha _{n}\rho Vx_{n}+\left( I-\alpha _{n}\mu F\right)
T^{n}y_{n}-x^{\ast },x_{n+1}-x^{\ast }\right\rangle  \\
&=&\left\langle \alpha _{n}\left( \rho Vx_{n}-\mu Fx^{\ast }\right) +\left(
I-\alpha _{n}\mu F\right) T^{n}y_{n}\right.  \\
&&\left. -\left( I-\alpha _{n}\mu F\right) T^{n}x^{\ast },x_{n+1}-x^{\ast
}\right\rangle  \\
&=&\alpha _{n}\rho \left\langle Vx_{n}-Vx^{\ast },x_{n+1}-x^{\ast
}\right\rangle +\alpha _{n}\left\langle \rho Vx^{\ast }-\mu Fx^{\ast
},x_{n+1}-x^{\ast }\right\rangle  \\
&&+\left\langle \left( I-\alpha _{n}\mu F\right) T^{n}y_{n}-\left( I-\alpha
_{n}\mu F\right) T^{n}x^{\ast },x_{n+1}-x^{\ast }\right\rangle  \\
&\leq &\alpha _{n}\rho \gamma \left\Vert x_{n}-x^{\ast }\right\Vert
\left\Vert x_{n+1}-x^{\ast }\right\Vert +\alpha _{n}\left\langle \rho
Vx^{\ast }-\mu Fx^{\ast },x_{n+1}-x^{\ast }\right\rangle  \\
&&+\left( 1-\alpha _{n}\nu \right) \left( \left\Vert y_{n}-x^{\ast
}\right\Vert +a_{n}\right) \left\Vert x_{n+1}-x^{\ast }\right\Vert .
\end{eqnarray*}%
Also, by using the inequality (\ref{2}\textbf{), }we get 
\begin{eqnarray*}
\left\Vert x_{n+1}-x^{\ast }\right\Vert ^{2} &\leq &\alpha _{n}\rho \gamma
\left\Vert x_{n}-x^{\ast }\right\Vert \left\Vert x_{n+1}-x^{\ast
}\right\Vert +\alpha _{n}\left\langle \rho Vx^{\ast }-\mu Fx^{\ast
},x_{n+1}-x^{\ast }\right\rangle  \\
&&+\left( 1-\alpha _{n}\nu \right) \left( \left\Vert x_{n}-x^{\ast
}\right\Vert +\beta _{n}\left\Vert Sx^{\ast }-x^{\ast }\right\Vert
+a_{n}\right) \left\Vert x_{n+1}-x^{\ast }\right\Vert  \\
&\leq &\left( 1-\alpha _{n}\left( \nu -\rho \gamma \right) \right)
\left\Vert x_{n}-x^{\ast }\right\Vert \left\Vert x_{n+1}-x^{\ast
}\right\Vert  \\
&&+\alpha _{n}\left\langle \rho Vx^{\ast }-\mu Fx^{\ast },x_{n+1}-x^{\ast
}\right\rangle  \\
&&+\left( 1-\alpha _{n}\nu \right) \beta _{n}\left\Vert Sx^{\ast }-x^{\ast
}\right\Vert \left\Vert x_{n+1}-x^{\ast }\right\Vert  \\
&&+\left( 1-\alpha _{n}\nu \right) a_{n}\left\Vert x_{n+1}-x^{\ast
}\right\Vert  \\
&\leq &\frac{\left( 1-\alpha _{n}\left( \nu -\rho \gamma \right) \right) }{2}%
\left( \left\Vert x_{n}-x^{\ast }\right\Vert ^{2}+\left\Vert x_{n+1}-x^{\ast
}\right\Vert ^{2}\right)  \\
&&+\alpha _{n}\left\langle \rho Vx^{\ast }-\mu Fx^{\ast },x_{n+1}-x^{\ast
}\right\rangle  \\
&&+\beta _{n}\left\Vert Sx^{\ast }-x^{\ast }\right\Vert \left\Vert
x_{n+1}-x^{\ast }\right\Vert +a_{n}\left\Vert x_{n+1}-x^{\ast }\right\Vert ,
\end{eqnarray*}%
which implies that%
\begin{eqnarray*}
\left\Vert x_{n+1}-x^{\ast }\right\Vert ^{2} &\leq &\frac{\left( 1-\alpha
_{n}\left( \nu -\rho \gamma \right) \right) }{\left( 1+\alpha _{n}\left( \nu
-\rho \gamma \right) \right) }\left\Vert x_{n}-x^{\ast }\right\Vert ^{2} \\
&&+\frac{2\alpha _{n}}{\left( 1+\alpha _{n}\left( \nu -\rho \gamma \right)
\right) }\left\langle \rho Vx^{\ast }-\mu Fx^{\ast },x_{n+1}-x^{\ast
}\right\rangle  \\
&&+\frac{2\beta _{n}}{\left( 1+\alpha _{n}\left( \nu -\rho \gamma \right)
\right) }\left\Vert Sx^{\ast }-x^{\ast }\right\Vert \left\Vert
x_{n+1}-x^{\ast }\right\Vert  \\
&&+\frac{2a_{n}}{\left( 1+\alpha _{n}\left( \nu -\rho \gamma \right) \right) 
}\left\Vert x_{n+1}-x^{\ast }\right\Vert  \\
&\leq &\left( 1-\alpha _{n}\left( \nu -\rho \gamma \right) \right)
\left\Vert x_{n}-x^{\ast }\right\Vert ^{2}+\alpha _{n}\left( \nu -\rho
\gamma \right) \theta _{n}\text{,}
\end{eqnarray*}%
where%
\begin{equation*}
\theta _{n}=\frac{2}{\left( 1+\alpha _{n}\left( \nu -\rho \gamma \right)
\right) \left( \nu -\rho \gamma \right) }\left[ 
\begin{array}{c}
\left\langle \rho Vx^{\ast }-\mu Fx^{\ast },x_{n+1}-x^{\ast }\right\rangle 
\\ 
+\frac{\beta _{n}}{\alpha _{n}}M_{3}+\frac{a_{n}}{\alpha _{n}}\left\Vert
x_{n+1}-x^{\ast }\right\Vert 
\end{array}%
\right] \text{,}
\end{equation*}%
and 
\begin{equation*}
\sup_{n\geq 1}\left\{ \left\Vert Sx^{\ast }-x^{\ast }\right\Vert \left\Vert
x_{n+1}-x^{\ast }\right\Vert \right\} \leq M_{3}.
\end{equation*}%
From condition (ii), by using Step 4, we get%
\begin{equation*}
\limsup_{n\rightarrow \infty }\theta _{n}\leq 0.
\end{equation*}%
So, it follows from Lemma \ref{Y} that the sequence $\left\{ x_{n}\right\} $
generated by (\ref{4}) converges strongly to $x^{\ast }\in Fix\left(
T\right) $ which is the unique solution of the variational inequality (\ref%
{v2}). This completes the proof.
\end{proof}

\begin{remark}
In particular, the point $x^{\ast }$\ is the minimum norm fixed point of $T,$
namely, $x^{\ast }$\ is the unique solution of the quadratic minimization
problem%
\begin{equation}
x^{\ast }=\func{argmin}_{x\in Fix\left( T\right) }\left\Vert x\right\Vert
^{2}\text{.}  \label{d3}
\end{equation}
Indeed, since the point $x^{\ast }$ is the unique solution of the
variational inequality (\ref{v2}), if we take $V=0$ and $F=I,$ then we get%
\begin{equation*}
\left\langle \left\langle \mu x^{\ast },x^{\ast }-x\right\rangle \leq 0,%
\text{ }\forall x\in Fix\left( T\right) \right\rangle .
\end{equation*}%
So we have%
\begin{equation*}
\left\langle x^{\ast },x^{\ast }-x\right\rangle =\left\langle x^{\ast
},x^{\ast }\right\rangle -\left\langle x^{\ast },x\right\rangle \leq
0\Longrightarrow \left\Vert x^{\ast }\right\Vert ^{2}\leq \left\Vert x^{\ast
}\right\Vert \left\Vert x\right\Vert .
\end{equation*}%
Hence, $x^{\ast }$ is the unique solution to the quadratic minimization
problem (\ref{d3}).
\end{remark}

Since a nearly nonexpansive mapping can be reduced to a nonexpansive mapping
by taking the sequence $\left\{ a_{n}\right\} $ as a zero sequence, under
the appropriate changings on the control sequences arising from Lemma \ref{Y}%
, we can derive main results of Wang and Xu \cite[Theorem 3.1]{wang} and
Ceng et. al. \cite[Theorem 3.1]{ceng} as following corollaries.

\begin{corollary}
Let $C$ be a nonempty closed convex subset of a real Hilbert space $H.$ Let $%
S,T:C\rightarrow C$ be nonexpansive mappings such that $Fix\left( T\right)
\neq \emptyset $. Let $V:C\rightarrow H$ be a $\gamma $- Lipschitzian
mapping, $F:C\rightarrow H$ be a $L$-Lipschitzian and $\eta $-strongly
monotone operator such that these coefficients satisfy $0<\mu <\frac{2\eta }{%
L^{2}}$, $0\leq \rho \gamma <\nu $, where $\nu =1-\sqrt{1-\mu \left( 2\eta
-\mu L^{2}\right) }$. For an arbitrarily initial value $x_{1}\in C,$
consider the sequence $\left\{ x_{n}\right\} $ in $C$ generated by (\ref{f4}%
) where $\left\{ \alpha _{n}\right\} $ and $\left\{ \beta _{n}\right\} $ are
sequences in $\left[ 0,1\right] $ satisfying the conditions:%
\begin{eqnarray*}
&(i)&\lim_{n\rightarrow \infty }\alpha _{n}=0\text{, and }%
\tsum_{n=1}^{\infty }\alpha _{n}=\infty \text{; \ \ \ \ \ \ \ \ \ \ \ \ \ \
\ \ \ \ \ \ \ \ \ \ \ \ \ \ \ \ \ \ \ \ \ \ \ \ \ \ \ \ \ \ \ \ \ \ \ \ \ }
\\
&(ii)&\lim_{n\rightarrow \infty }\frac{\beta _{n}}{\alpha _{n}}=0\text{, } \\
&(iii)&\tsum_{n=1}^{\infty }\left\vert \alpha _{n+1}-\alpha _{n}\right\vert
<\infty \text{ and }\tsum_{n=1}^{\infty }\left\vert \beta _{n+1}-\beta
_{n}\right\vert <\infty \text{. \ \ \ \ \ }
\end{eqnarray*}%
Then, the sequence $\left\{ x_{n}\right\} $ converges strongly to $x^{\ast
}\in Fix\left( T\right) $, where $x^{\ast }$\ is the unique solution of the
variational inequality (\ref{v2}). In particular, the point $x^{\ast }$\ is
the minimum norm fixed point of $T,$ that is $x^{\ast }$\ is the unique
solution of the quadratic minimization problem (\ref{d3}).
\end{corollary}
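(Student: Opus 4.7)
The plan is to follow the five-step scheme used in the proof of Theorem~\ref{X}, exploiting that a nonexpansive $T$ is nearly nonexpansive with $a_n\equiv 0$, and that (\ref{f4}) is the specialization of (\ref{4}) obtained by replacing $T^n$ with $T$. Every term of the form $a_n$, $\|T^ny_{n-1}-T^{n-1}y_{n-1}\|$, or $\|T^nx-T^{n-1}x\|$ that appears in Theorem~\ref{X} simply disappears, so the only genuine change concerns how the recursive estimates are closed by Lemma~\ref{Y}.

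Steps~1, 3, 4 and 5 should go through virtually unchanged. Step~1 (boundedness of $\{x_n\}$ and of the derived sequences $\{y_n\},\{Sx_n\},\{Vx_n\},\{Ty_n\},\{FTy_n\}$) is a verbatim repeat with $a_n=0$. In Step~3 I would use the one-line estimate
\begin{equation*}
\|x_n-Tx_n\| \le \|x_n-x_{n+1}\| + \alpha_n\|\rho Vx_n-\mu FTy_n\| + \beta_n\|Sx_n-x_n\|,
\end{equation*}
which tends to $0$ by Step~2 together with conditions~(i)--(ii), sparing us the iterated chain (\ref{d1})--(\ref{c2}) that was needed for the $T^n$-version. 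Step~4 then uses Opial's condition and the demiclosedness principle for nonexpansive mappings (equivalently, Lemma~\ref{b} specialized to $a_n=0$) to produce a weak cluster point $\tilde x\in Fix(T)$ and to conclude $\limsup_n\langle(\rho V-\mu F)x^\ast,x_n-x^\ast\rangle\le 0$ via (\ref{v2}). Step~5 is then a word-for-word copy of its counterpart in Theorem~\ref{X} with $a_n=0$, closed by one last application of Lemma~\ref{Y}.

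The heart of the proof, and the place where the assumptions of the corollary really enter, is Step~2. Deleting the vanishing $T^n-T^{n-1}$ contributions reduces the recursion to
\begin{equation*}
\|x_{n+1}-x_n\| \le \bigl(1-\alpha_n(\nu-\rho\gamma)\bigr)\|x_n-x_{n-1}\| + K\bigl(|\alpha_n-\alpha_{n-1}| + |\beta_n-\beta_{n-1}|\bigr),
\end{equation*}
where $K$ absorbs the Step~1 bounds on $\{Vx_{n-1}\}$, $\{FTy_{n-1}\}$, $\{Sx_n\}$ and $\{x_n\}$. The control we now have on $\{\alpha_n\},\{\beta_n\}$ is the $\ell^1$-hypothesis~(iii) of the corollary rather than the ratio conditions of Theorem~\ref{X}, so here I would invoke the \emph{second} alternative of Lemma~\ref{Y} (the $\sum \alpha_n\beta_n<\infty$ clause) instead of the first. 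This switch, and the verification that the residual term really fits that clause, will be the main (admittedly minor) obstacle; the rest is routine bookkeeping. The minimum-norm interpretation of $x^\ast$ then follows by setting $V=0$, $F=I$ exactly as in the remark following Theorem~\ref{X}.
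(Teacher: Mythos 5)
Your proposal is correct and follows essentially the same route as the paper, whose entire proof of this corollary is the one-line reduction ``take $T_{n}=T$ nonexpansive (i.e.\ $a_{n}\equiv 0$) in Theorem~\ref{X}.'' You in fact supply the one detail the paper leaves implicit --- that replacing the ratio conditions of Theorem~\ref{X} by the summability hypotheses (iii) forces a switch to the second alternative of Lemma~\ref{Y} in Step~2 --- and your verification of that point is right.
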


\begin{proof}
In the proof of Theorem \ref{X}, for all $n\geq 1$, if we take $T_{n}=T$
such that $T$ is a nonexpansive mapping , then the desired conclusion is
obtained.
\end{proof}

\begin{corollary}
Let $C$ be a nonempty closed convex subset of a real Hilbert space $H.$ Let $%
T:C\rightarrow C$ be a nonexpansive mapping such that $Fix\left( T\right)
\neq \emptyset $. Let $V:C\rightarrow H$ be a $\gamma $- Lipschitzian
mapping with, $F:C\rightarrow H$ be a $L$- Lipschitzian and $\eta $-
strongly monotone operator such that these coefficients satisfy $0<\mu <%
\frac{2\eta }{L^{2}}$, $0\leq \rho \gamma <\nu $, where $\nu =1-\sqrt{1-\mu
\left( 2\eta -\mu L^{2}\right) \text{. }}$ For an arbitrarily initial value $%
x_{1}\in C,$ consider the sequence $\left\{ x_{n}\right\} $ in $C$ generated
by (\ref{f2}) where $\left\{ \alpha _{n}\right\} $ and $\left\{ \beta
_{n}\right\} $ are sequences in $\left[ 0,1\right] $ satisfying the
conditions:$.$%
\begin{eqnarray*}
&\text{(i)}&\lim_{n\rightarrow \infty }\alpha _{n}=0\text{ and }%
\tsum_{n=1}^{\infty }\alpha _{n}=\infty \text{;} \\
&\text{(ii)}&\text{either }\tsum_{n=1}^{\infty }\left\vert \alpha
_{n+1}-\alpha _{n}\right\vert <\infty \text{ or }\lim_{n\rightarrow \infty }%
\frac{\alpha _{n+1}}{\alpha _{n}}=0,\text{\ \ \ \ \ \ \ \ \ \ \ \ \ \ \ \ \
\ \ \ \ \ \ \ \ \ \ \ \ \ \ \ \ \ \ \ }\text{\ }\text{\ }
\end{eqnarray*}%
Then, the sequence $\left\{ x_{n}\right\} $ converges strongly to $x^{\ast
}\in Fix\left( T\right) $, where $x^{\ast }$\ is the unique solution of the
variational inequality (\ref{v2}).
\end{corollary}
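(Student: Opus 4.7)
The plan is to follow the five-step architecture of the proof of Theorem \ref{X}, taking advantage of the dramatic simplifications afforded by the absence of $S$ (hence no $y_n$ and no parameter $\beta_n$) and the fact that $T$ is plainly nonexpansive, so that $T^n$ reduces to $T$ and the nearly nonexpansive error sequence $\{a_n\}$ may be taken identically zero. This eliminates condition (iii) of Theorem \ref{X} entirely and causes most of its estimates to collapse.

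Write $t_n := \alpha_n \rho V x_n + (I - \alpha_n \mu F) T x_n$, so that $x_{n+1} = P_C t_n$, and fix the unique $x^* \in Fix(T)$ solving (\ref{v2}) (its existence via Lemma \ref{B}). First I would show $\{x_n\}$ is bounded: for any $p \in Fix(T)$, the nonexpansiveness of $P_C$ and Lemma \ref{c} yield
\[
\|x_{n+1} - p\| \leq (1 - \alpha_n(\nu - \rho\gamma))\|x_n - p\| + \alpha_n \|\rho V p - \mu F p\|,
\]
so induction closes this step and also gives the boundedness of $\{T x_n\}$, $\{V x_n\}$, and $\{F T x_n\}$. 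Next I would derive the asymptotic regularity $\|x_{n+1} - x_n\| \to 0$ by expanding $\|t_n - t_{n-1}\|$ to obtain a recursion
\[
\|x_{n+1} - x_n\| \leq (1 - \alpha_n(\nu - \rho\gamma))\|x_n - x_{n-1}\| + K|\alpha_n - \alpha_{n-1}|
\]
for a uniform constant $K$. Under the first alternative in condition (ii), $\sum |\alpha_n - \alpha_{n-1}| < \infty$ feeds directly into Lemma \ref{Y}; under the second alternative, a Suzuki-style lemma for recursions of this shape is needed to reach the same conclusion.

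With $\|x_{n+1} - x_n\| \to 0$ in hand, the estimate
\[
\|x_{n+1} - T x_n\| = \|P_C t_n - P_C T x_n\| \leq \alpha_n \|\rho V x_n - \mu F T x_n\|
\]
together with $\alpha_n \to 0$ gives $\|x_n - T x_n\| \to 0$. I would then extract a weakly convergent subsequence $x_{n_k} \rightharpoonup \tilde x$ realizing $\limsup_{n\to\infty}\langle (\rho V - \mu F) x^*, x_n - x^*\rangle$; demiclosedness of $I - T$ at $0$, a consequence of Opial's property (compare Theorem \ref{C} and Lemma \ref{b}), places $\tilde x \in Fix(T)$, after which the variational inequality (\ref{v2}) yields $\limsup_{n\to\infty}\langle (\rho V - \mu F) x^*, x_n - x^*\rangle \leq 0$.

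To finish, mirroring Step 5 of Theorem \ref{X}, I would expand $\|x_{n+1} - x^*\|^2 = \langle P_C t_n - x^*, x_{n+1} - x^*\rangle$ via (\ref{c1}), apply Lemma \ref{c} together with Cauchy--Schwarz, and rearrange to produce
\[
\|x_{n+1} - x^*\|^2 \leq (1 - \alpha_n(\nu - \rho\gamma))\|x_n - x^*\|^2 + \alpha_n(\nu - \rho\gamma)\theta_n,
\]
with $\theta_n$ controlled by $\langle \rho V x^* - \mu F x^*, x_{n+1} - x^*\rangle$, so $\limsup \theta_n \leq 0$ by the previous step; Lemma \ref{Y} then delivers $x_n \to x^*$. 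The only step requiring genuine care is the second, where the alternative form of condition (ii) must be handled by a Suzuki-type argument rather than by a direct summability application of Lemma \ref{Y}; all remaining estimates are immediate specializations of the corresponding ones in Theorem \ref{X}.
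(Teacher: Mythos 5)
Your proposal is correct and follows essentially the same route as the paper, whose entire proof of this corollary is the one-line observation that one takes $S=I$ and $T$ nonexpansive (so $a_n\equiv 0$, $\beta_n$ drops out, and $T^n y_n$ collapses to $Tx_n$) in the proof of Theorem \ref{X}. You are in fact more careful than the paper on the one point it glosses over — that the summability-type condition (ii) here feeds into the second alternative of Lemma \ref{Y} rather than the ratio conditions of Theorem \ref{X} — which is exactly the ``appropriate changings on the control sequences arising from Lemma \ref{Y}'' the authors allude to before stating the corollaries.
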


\begin{proof}
In the proof of Theorem \ref{X}, let $S=I$ where $I$ is the identity mapping
and $T$ be a nonexpansive mapping. Then the proof is clear.
\end{proof}

Let $\left\{ T_{n}\right\} $ be a sequence of mappings from $C$ into $H$ and 
$\left\{ a_{m}^{n}\right\} $ be a sequence in $\left[ 0,\infty \right) $\
with $\lim_{m\rightarrow \infty }a_{m}^{n}=0$, for each $n\geq 1$. Then, $%
\left\{ T_{n}\right\} $ is called a sequence of nearly nonexpansive mappings
with respect to a sequence $\left\{ a_{m}^{n}\right\} $ if%
\begin{equation}
\left\Vert T_{n}^{m}x-T_{n}^{m}y\right\Vert \leq \left\Vert x-y\right\Vert
+a_{m}^{n}\text{, }\forall x,y\in C\text{, }n,m\geq 1.  \label{nearly1}
\end{equation}%
Considering Remark \ref{AA} and Remark \ref{AB}, it is to easy to see that
the sequence $\left\{ T_{n}\right\} $ defined by (\ref{nearly1}) is
different from the sequence defined by (\ref{nearly}) of Wong et. al. \cite%
{wong}. The iterative scheme defined by (\ref{4}) can be modified for the
sequence of nearly nonexpansive mappings defined by (\ref{nearly1}).
Accordingly, the problem written in the following remark arises.

\begin{remark}
For a sequence of nearly nonexpansive mappings (\ref{nearly1}) with a
nonempty common fixed points set, it is an open problem whether or not an
iteration process generated by this sequence\ will converge strongly to a
common fixed point.
\end{remark}

\end{document}